\documentclass[reqno,11pt]{amsart}

\usepackage{amscd}
\usepackage{amsmath}
\usepackage{amssymb}
\usepackage[american]{babel}
\usepackage{bbm}
\usepackage{bookmark}
\usepackage{cmap} 
\usepackage{dsfont}
\usepackage{enumerate}
\usepackage{epigraph}
\usepackage[mathscr]{euscript}
\usepackage[myheadings]{fullpage}
\usepackage{graphicx}
\usepackage[geometry]{ifsym}
\usepackage{mathabx}
\usepackage{accents}
\usepackage{mathrsfs}
\usepackage{mathtools}
\usepackage{refcount}
\usepackage{setspace}
\usepackage{stmaryrd}
\usepackage{thinsp}
\usepackage{verbatim}
\usepackage[all,2cell]{xy} \UseTwocells
\usepackage{xr}
\usepackage[multiple]{footmisc}
\usepackage{hyperref}

\numberwithin{equation}{subsection}

\newtheorem{thm}{Theorem}[subsection]
\newtheorem*{thm*}{Theorem}

\newtheorem*{cor*}{Corollary}
\newtheorem{lem}[thm]{Lemma}

\newtheorem{prop}[thm]{Proposition}
\newtheorem{prop-const}[thm]{Proposition-Construction}

\newtheorem*{conjecture*}{Conjecture}

\newtheorem*{princ*}{Principle}

\theoremstyle{remark}
\newtheorem{rem}[thm]{Remark}
\newtheorem{example}[thm]{Example}

\newtheorem{defin}[thm]{Definition}

\newcommand{\xar}[1]{\xrightarrow{#1}}
\newcommand{\rar}[1]{\xar{#1}}
\newcommand{\isom}{\rar{\simeq}}

\newcommand{\into}{\hookrightarrow}

\newcommand{\bA}{{\mathbb A}}

\newcommand{\bD}{{\mathbb D}}

\newcommand{\bZ}{{\mathbb Z}}

\newcommand{\sC}{{\EuScript C}}
\newcommand{\sD}{{\EuScript D}}

\newcommand{\sF}{{\EuScript F}}
\newcommand{\sG}{{\EuScript G}}

\newcommand{\sO}{{\EuScript O}}

\newcommand{\on}{\operatorname}
\newcommand{\ol}[1]{\overline{#1}{}}

\newcommand{\Ker}{\on{Ker}}

\newcommand{\Hom}{\on{Hom}}

\newcommand{\Spec}{\on{Spec}}

\newcommand{\id}{\on{id}}

\newcommand{\gr}{\on{gr}}

\renewcommand{\dot}{\bullet}

\newcommand{\Image}{\on{Image}}

\newcommand{\Gal}{\on{Gal}}

\newcommand{\colim}{\on{colim}}

\renewcommand{\lim}{\on{lim}}
\newcommand{\Ind}{\mathsf{Ind}}
\newcommand{\Pro}{\mathsf{Pro}}

\newcommand{\heart}{\heartsuit}

\newcommand{\QCoh}{\mathsf{QCoh}}

\renewcommand{\SS}{\on{SS}}

\renewcommand{\subset}{\subseteq}

\makeatletter
\newcommand{\biggg}{\bBigg@{4}}
\newcommand{\Biggg}{\bBigg@{5}}
\makeatother


\date{\today}

\begin{document}

\frenchspacing

\setlength{\epigraphwidth}{0.9\textwidth}
\renewcommand{\epigraphsize}{\footnotesize}

\title{A generalization of the $b$-function lemma}

\author{Sam Raskin}

\address{Massachusetts Institute of Technology, 
77 Massachusetts Avenue, Cambridge, MA 02139.} 
\email{sraskin@mit.edu}

\begin{abstract}

We establish some cohomological bounds
in $D$-module theory that are known in the holonomic
case and folklore in general. The method rests on a generalization
of the $b$-function lemma for non-holonomic $D$-modules. 

\end{abstract}

\maketitle

\setcounter{tocdepth}{1}
\tableofcontents

\section{Introduction}

\subsection{}

This note studies how $D$-module operations interact with
singular support. The main technical result, Theorem \ref{t:hol},
shows that $D$-module operations preserve the natural
numerical obstruction to holonomicity. 
This result generalizes the usual preservation of 
holonomic $D$-modules under such operations, which is
essentially equivalent to the $b$-function lemma: 
see \cite{kashiwara} or \cite{bernstein}.

\subsection{Affine morphisms}

As an application, we show in Theorem \ref{t:aff}
that $f_!$ is left $t$-exact for an affine morphism $f:X \to Y$.

This is certainly an old folklore result. 
Of course it is standard for holonomic $D$-modules,
where it is a consequence of the usual $b$-function lemma.
It is also easy to show for $Y = \Spec(k)$,
or for a map of curves (e.g., an open embedding).
Otherwise, it does not seem to follow from existing 
foundational results in the literature, which is quite
surprising for something so basic.

We remark that the formulation of this result
does not quite make sense, since $f_!(\sF)$ does not typically make 
sense as a $D$-module (although it always does if $\sF$ is holonomic).
One can rectify this in one of two ways: one can ask to
show that if $\sF$ is in cohomological degrees $\geq 0$
and $f_!(\sF)$ is defined, $f_!(\sF)$ is in degrees $\geq 0$; or
one can work with pro-complexes. We use the latter technique, since
it is somewhat more general. For technical reasons,
we only work with coherent $D$-modules $\sF$.

Applying this result for non-holonomic $D$-modules is actually useful in geometric representation theory.
The point is that in many settings typical
of the subject,
$f_!$ is defined on some non-holonomic 
$D$-modules of interest even when $f$ is affine. 
For example,
this occurs for the Fourier-Deligne transform, and the results
here can be used to show its $t$-exactness in a conceptual way.\footnote{C.f.
\cite{kernels} \S 1.8. Note that \emph{loc. cit}. implicitly assumes the
left $t$-exactness of $f_!$ for affine $f$.}
For an application of such results to Lie theory, see \cite{w-alg}. 

\subsection{Notation}

We let $k$ denote a field of characteristic zero.
We use the phrase ``category" to mean $\infty$-category
wherever appropriate. (This language is used only very
mildly.) 

By a \emph{variety}, we mean a reduced,
separated, finite type $k$-scheme.

For $X$ a variety over $k$, we let $D(X)$ denote the DG category
of $D$-modules on $X$. 
We let $D(X)^{\geq i}$ and $D(X)^{\leq i}$ respectively
denote the subcategories of complexes $\sF \in D(X)$
with $H^j(\sF) = 0$ for $j<i$ and $j>i$ respectively. 
We let $D(X)^{\heart} = D(X)^{\geq 0} \cap D(X)^{\leq 0}$ 
denote the heart of the $t$-structure, i.e., the abelian
category of $D$-modules. We let $\tau^{\geq i}$
and $\tau^{\leq i}$ denote the corresponding truncation
functors.

For $f:X \to Y$, we let $f^!:D(Y) \to D(X)$ and $f_{*,dR}:D(X) \to D(Y)$
denote the $D$-module pullback and pushforward operations.
We let $f_!$ and $f^{*,dR}$ denote their left adjoints where
appropriate.

We let $D(X)^c \subset D(X)$ denote the DG subcategory
of coherent complexes, i.e., bounded complexes with
coherent (i.e., locally finitely generated) cohomology groups.
Recall that $D(X)$ is compactly generated, i.e.,
$D(X) = \Ind(D(X)^c)$. We let $\bD:D(X)^c \isom D(X)^{c,op}$ denote
the Verdier duality functor.

\subsection{Acknowledgements} 

We are grateful to Dennis Gaitsgory for some useful
correspondence on the subject of this note.
The methods owe a great deal to \cite{bernstein},
\cite{ginzburg}, and \cite{kashiwara}.

\section{Holonomic defect}

\subsection{} In this section, we introduce a generalization of the
holonomic condition on a $D$-module and show that it is
preserved under $D$-module operations. 

The method is standard. 
The main point is Lemma \ref{l:b-fn}, which is a generalization of the
fact that pushforward along an open embedding
preserves holonomic objects, which is essentially equivalent to the
usual $b$-function lemma. The main difference is that we 
cannot use finite length methods.

The presentation is based on \cite{kashiwara} and \cite{ginzburg}.

\subsection{Gabber-Kashiwara-Sato (GKS) filtration}

We begin by reviewing some material from \cite{ginzburg} \S 1.

Let $X$ be a variety and let $\sF \in D(X)^{\heart}$
be a given $D$-module.

\begin{defin}

For an integer $i$, we let:

\[
F_i^{GKS} \sF \coloneqq 
\Image(H^0(\bD \tau^{\geq -i} \bD \sF) \to \sF). 
\]

\end{defin}

\begin{rem}

By definition, $\bD \tau^{\geq -i} \bD \sF \in D(X)$ means the obvious
thing if $\sF$ is coherent, and in general, we understand this
expression to commute with filtered colimits. 
(It is equivalent interpret this more literally
and consider 
$\bD$ as an equivalence between $D(X)$ and the
DG category of pro-coherent $D$-modules,
equipped with the $t$-structure of \S \ref{ss:pro-tstr}.)

\end{rem}

Note that $F_{\dot}^{GKS}$ is an increasing filtration on
$\sF$. Because $\bD \sF$ is in cohomological degrees $[-\dim X,0]$,
we have $F_i^{GKS} \sF = 0$ for $i<0$, and 
$F_i^{GKS} \sF = \sF$ for $i \geq \dim X$.
Formation of the GKS filtration is functorial for $D$-module
morphisms, i.e., a map
$\sF_1 \to \sF_2 \in D(X)^{\heart}$
sends $F_i^{GKS} \sF_1$ to $F_i^{GKS} \sF_2$.

Note that if $\sF = \colim_j \sF_j$ is a filtered colimit in $D(X)^{\heart}$,
then $F_i^{GKS} \sF = \colim_j F_i^{GKS} \sF_j$.

\begin{lem}

Formation of $F_{\dot}^{GKS}$ commutes with
open restriction and pushforwards along closed embeddings.

\end{lem}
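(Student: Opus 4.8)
The plan is to reduce to the coherent case and then exploit that both operations are $t$-exact and commute with Verdier duality. Write $\Phi$ for either $j^!$ (open restriction along an open embedding $j\colon U \into X$) or $i_{*,dR}$ (pushforward along a closed embedding $i\colon Z \into X$). In both cases $\Phi$ is a continuous $t$-exact functor carrying $D(X)^c$ into the coherent objects of the target, and—this is the key input—$\Phi$ commutes with Verdier duality: for open restriction this is because $j^! = j^{*,dR}$ for an open embedding and duality commutes with smooth (here étale) pullback, while for a closed embedding $i$ is proper, so $i_{*,dR} = i_!$ and duality commutes with proper pushforward. I would isolate these two commutations as the standing facts, citing \cite{kashiwara} and \cite{ginzburg} for them.

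Granting that, the coherent case is essentially formal. Since $\Phi$ is $t$-exact it commutes with the truncation $\tau^{\geq -i}$ and with $H^0$; since it commutes with $\bD$ it therefore commutes with the entire composite $\sF \mapsto \bD\,\tau^{\geq -i}\,\bD\,\sF$, as well as with the natural map of this object to $\sF$, the latter being obtained by dualizing the truncation map $\bD\sF \to \tau^{\geq -i}\bD\sF$. Finally $\Phi$, being exact on the abelian categories of $D$-modules (for $i_{*,dR}$ this is Kashiwara's equivalence), preserves images of morphisms in the heart. Chaining these compatibilities identifies $\Phi\big(\Image(H^0(\bD\tau^{\geq -i}\bD\sF)\to\sF)\big)$ with $\Image(H^0(\bD\tau^{\geq -i}\bD\,\Phi\sF)\to\Phi\sF)$, which is exactly the assertion $\Phi(F_i^{GKS}\sF) = F_i^{GKS}(\Phi\sF)$.

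To pass from coherent to arbitrary $\sF \in D(X)^\heart$, I would write $\sF = \colim_\alpha \sF_\alpha$ as a filtered colimit of coherent $D$-modules. Formation of $F_\bullet^{GKS}$ commutes with such colimits (as recorded before the lemma), $\Phi$ is continuous, and $\Phi$ preserves coherence; so applying the coherent case termwise and commuting the colimit past $\Phi$ and past $F_\bullet^{GKS}$ on both sides yields the general statement.

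I expect the only genuine content to lie in the two duality commutations; the remainder is bookkeeping with $t$-exactness and continuity. The subtlety worth flagging is that $\bD$ is defined literally only on coherent objects and is extended to $D(X)$ by the convention that $\sF \mapsto \bD\tau^{\geq -i}\bD\sF$ commutes with filtered colimits, so it is precisely the reduction to the coherent case that makes the duality commutations applicable; this is the step I would be most careful to justify.
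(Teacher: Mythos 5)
Your proof is correct and follows exactly the paper's argument, which is the one-line observation that both functors are $t$-exact and commute with Verdier duality; your elaboration (commuting $\Phi$ past $\tau^{\geq -i}$, $H^0$, $\bD$, and images in the heart, then extending to non-coherent $\sF$ by filtered colimits and continuity) is precisely the bookkeeping the paper leaves implicit. The only superfluous remark is the appeal to Kashiwara's equivalence for exactness of $i_{*,dR}$ on hearts, which already follows formally from $t$-exactness.
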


\begin{proof}

Each of these functors is $t$-exact and commutes with Verdier duality.

\end{proof}

Therefore, many results about this 
filtration reduce to the case of smooth
$X$ by taking Zariski local closed embeddings into
affine space. The key property in the smooth case is:

\begin{thm}\label{t:gks}

If $X$ is smooth, then a
local section $s$ of $\sF$ lies in $F_i^{GKS} \sF$ if and
only if the $D$-module generated by it has 
singular support with dimension $\leq \dim X + i$.\footnote{
We regard $\dim X$ as a locally constant function on $X$
if $X$ is not equidimensional.}

\end{thm}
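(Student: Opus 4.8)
The plan is to identify $F_i^{GKS}\sF$ with the largest $\cD_X$-submodule of $\sF$ whose singular support has dimension $\le \dim X + i$. Granting this, a local section $s$ lies in $F_i^{GKS}\sF$ precisely when $\cD_X \cdot s$ has singular support of dimension $\le \dim X + i$, which is the assertion. The argument rests on two classical homological facts, which I would cite rather than reprove: first, the Auslander regularity of $\cD_X$, which gives for coherent $\sN$ the codimension estimate $\dim\SS\bigl(H^k(\bD\sN)\bigr) \le \dim X - k$ (equivalently, $\Ext^j_{\cD_X}(\sN,\cD_X)$ has characteristic variety of codimension $\ge j$); and second, the Gabber--Sato--Kawai--Kashiwara equality $\dim\SS(\sN) = 2\dim X - j(\sN)$, where $j(\sN)$ denotes the grade, i.e.\ the least $j$ with $\Ext^j_{\cD_X}(\sN,\cD_X) \ne 0$. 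As the statement and singular support are local, I may assume $X$ is affine. The implicit side-changing twists by $\omega_X$ in $\bD$ affect neither vanishing of cohomology nor singular support, and will be suppressed.

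First I would bound the singular support of $F_i^{GKS}\sF$ itself, in the case $\sF$ coherent. By definition $F_i^{GKS}\sF$ is a quotient of $H^0(\bD\tau^{\ge -i}\bD\sF)$, so $\SS(F_i^{GKS}\sF) \subseteq \SS(\bD\tau^{\ge -i}\bD\sF) = \SS(\tau^{\ge -i}\bD\sF)$, using that singular support is preserved by $\bD$. The latter is the union of $\SS(H^k(\bD\sF))$ over $-i \le k \le 0$, and each such term has dimension $\le \dim X - k \le \dim X + i$ by the codimension estimate. Hence $\dim\SS(F_i^{GKS}\sF) \le \dim X + i$. In particular, any section $s \in F_i^{GKS}\sF$ generates a submodule $\cD_X \cdot s \subseteq F_i^{GKS}\sF$ of singular support dimension $\le \dim X + i$; this is the ``only if'' direction.

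For the converse, suppose $\sM := \cD_X \cdot s$ satisfies $\dim\SS(\sM) \le \dim X + i$. By the grade identification this means $j(\sM) \ge \dim X - i$, i.e.\ $\Ext^j_{\cD_X}(\sM,\cD_X) = 0$ for $j < \dim X - i$, which is exactly the statement that $\bD\sM \in D(X)^{\ge -i}$. Then $\tau^{\ge -i}\bD\sM = \bD\sM$, so $\bD\tau^{\ge -i}\bD\sM = \sM$ and $F_i^{GKS}\sM = \sM \ni s$. Applying the functoriality of the GKS filtration to the inclusion $\sM \hookrightarrow \sF$ then places $s$ in $F_i^{GKS}\sF$. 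The reduction of the general case (where $\sF$ need not be coherent) to the coherent case in both directions is routine: one writes $\sF = \colim_j \sF_j$ as a filtered colimit of its coherent submodules, uses $F_i^{GKS}\sF = \colim_j F_i^{GKS}\sF_j$, and notes that both $s$ and $\SS(\cD_X \cdot s)$ already live on a single coherent $\sF_j$.

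The hard part is the first direction, and specifically the two cited inputs: the Auslander regularity of $\cD_X$ behind the codimension estimate, and the identification of the grade with the codimension of the characteristic variety. These are deep but entirely classical, and in the holonomic case they reduce to the usual $b$-function lemma; once they are granted, the remainder is a formal manipulation of truncation functors, Verdier duality, and the functoriality of the filtration, making no use of finiteness of length.
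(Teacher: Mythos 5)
Your proposal is correct, and it is essentially the paper's own proof: the paper simply defers to \cite{ginzburg} (Proposition V.14), and the argument behind that citation is exactly the one you give — the Auslander-regularity codimension estimate for $\Ext^j_{\cD_X}(-,\cD_X)$, the grade--codimension equality for $\SS$, biduality, and functoriality of the filtration. Your handling of the two directions (bounding $\SS(F_i^{GKS}\sF)$ via $\SS(\bD\sG)=\SS(\sG)$, and showing $F_i^{GKS}\sM=\sM$ when $\bD\sM\in D(X)^{\geq -i}$) together with the filtered-colimit reduction to the coherent case is exactly the standard Gabber--Kashiwara--Sato argument the paper relies on.
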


See \cite{ginzburg} Proposition V.14. 
Note that it is equivalent to say that $F_i^{GKS} \sF$ is the maximal
submodule of $\sF$ with singular support of dimension 
$\leq \dim X+i$.

\subsection{Holonomic defect}

For $\delta \in \bZ^{\geq 0}$, we 
say $\sF \in D(X)^{\heart}$ has \emph{holonomic defect $\delta$}
if $F_{\delta}^{GKS} \sF = \sF$. 

\begin{rem}

If $\sF$ has holonomic defect $\delta$, then it also has
holonomic defect $1+\delta$.

\end{rem}

\begin{example}

A coherent $D$-module 
$\sF$ has holonomic defect $0$ if and only if $\sF$ is holonomic.
Indeed, this follows by reduction to the smooth case
and Theorem \ref{t:gks}.

\end{example}

\begin{example}

Every $\sF$ has holonomic defect $\dim X$.

\end{example}

\begin{example}

If $X$ is smooth and $\sF$ is coherent, then
by Theorem \ref{t:gks}, $\sF$ has holonomic defect $\delta$
if and only if $\sF$ has singular support with dimension
$\leq \dim X + \delta$.

\end{example}

\begin{lem}\label{l:defect-sub/quot/ext}

The subcategory of $D(X)^{\heart}$ consisting of
objects with holonomic defect $\delta$ is closed under
submodules, quotient modules, and extensions.

\end{lem}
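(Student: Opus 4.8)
The plan is to reduce everything to the defining condition $F_{\delta}^{GKS}\sF = \sF$ and exploit the functoriality and colimit-compatibility of the GKS filtration that were established just above. Recall that a $D$-module $\sF \in D(X)^{\heart}$ has holonomic defect $\delta$ precisely when $F_{\delta}^{GKS}\sF = \sF$, and that $\sG \mapsto F_i^{GKS}\sG$ is functorial for morphisms in $D(X)^{\heart}$: a map $\sG_1 \to \sG_2$ carries $F_i^{GKS}\sG_1$ into $F_i^{GKS}\sG_2$. I would treat the three closure properties in turn.

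For \emph{submodules}, suppose $\sG \subset \sF$ with $\sF$ of holonomic defect $\delta$, i.e.\ $F_\delta^{GKS}\sF = \sF$. Since $F_\delta^{GKS}\sF$ is the \emph{maximal} submodule of $\sF$ whose generated $D$-module has singular support of dimension $\leq \dim X + \delta$ (using the interpretation from Theorem \ref{t:gks} after reducing to the smooth case via a local closed embedding into affine space), any submodule $\sG$ is automatically contained in $F_\delta^{GKS}\sF$, and one checks $F_\delta^{GKS}\sG = \sG$. More cleanly and without invoking smoothness: the inclusion $\sG \into \sF$ gives $F_\delta^{GKS}\sG \subset F_\delta^{GKS}\sF = \sF$, but in fact a local section of $\sG$ lies in $F_\delta^{GKS}\sG$ iff it lies in $F_\delta^{GKS}\sF$ (the condition of Theorem \ref{t:gks} on the generated $D$-module is intrinsic), whence $F_\delta^{GKS}\sG = \sG$.

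For \emph{quotients}, let $\sF \onto \sQ$ with $\sF$ of defect $\delta$. Functoriality gives a surjection $F_\delta^{GKS}\sF \to F_\delta^{GKS}\sQ$ fitting into a commuting square with the surjection $\sF \onto \sQ$; since $F_\delta^{GKS}\sF = \sF$ maps onto all of $\sQ$ and this map factors through $F_\delta^{GKS}\sQ \subset \sQ$, we get $F_\delta^{GKS}\sQ = \sQ$. For \emph{extensions}, consider a short exact sequence $0 \to \sG \to \sF \to \sQ \to 0$ in $D(X)^{\heart}$ with $\sG$ and $\sQ$ both of holonomic defect $\delta$. Applying Theorem \ref{t:gks} in the smooth case, the singular support condition ``$\dim \SS \leq \dim X + \delta$'' is characterized by vanishing of the appropriate cohomology sheaves, and such conditions are stable under extension; equivalently, one argues that $\sF/F_\delta^{GKS}\sF$ receives zero from $\sG$ (as $\sG = F_\delta^{GKS}\sG$ maps into $F_\delta^{GKS}\sF$) and maps isomorphically from a subobject of $\sQ/F_\delta^{GKS}\sQ = 0$, forcing $F_\delta^{GKS}\sF = \sF$.

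The main obstacle is the extension case, since the GKS filtration is built from the image of a truncation under Verdier duality, and $H^0(\bD\,\tau^{\geq -\delta}\,\bD(-))$ need not be exact in the middle. The clean way around this is to reduce to smooth $X$ by the preceding lemma (GKS commutes with closed embeddings and open restriction) and then apply Theorem \ref{t:gks} to rephrase holonomic defect $\delta$ as the purely geometric condition $\dim \SS(\sF) \leq \dim X + \delta$; the dimension of singular support is manifestly controlled under sub, quotient, and extension (the support of $\gr^W\!\sF$ in the cotangent bundle for any good filtration is contained in the union of those for $\sG$ and $\sQ$), which dispatches all three cases uniformly. I would present the geometric reduction as the primary argument and note the abstract functorial argument as an alternative that avoids smoothness for the sub and quotient cases.
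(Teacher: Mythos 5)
Your primary argument---reducing to smooth $X$ via the compatibility of $F_{\dot}^{GKS}$ with open restriction and closed pushforward, then translating holonomic defect $\delta$ into the geometric bound $\dim \SS \leq \dim X + \delta$ via Theorem \ref{t:gks} and invoking the standard behavior of singular support under submodules, quotients, and extensions---is exactly the paper's proof. One caveat: your parenthetical ``equivalently'' sketch for the extension case is circular, since identifying $\sF/F_\delta^{GKS}\sF$ with a quotient of $\sQ/F_\delta^{GKS}\sQ$ requires knowing that sections of $F_\delta^{GKS}\sQ$ lift to sections of $F_\delta^{GKS}\sF$ (i.e., right-exactness of $F_\delta^{GKS}$ on the sequence), which is precisely what is to be proved; but since you designate the geometric reduction as the primary argument, this does not affect correctness.
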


\begin{proof}

The argument reduces to the case of $X$ smooth, and then
follows from Theorem \ref{t:gks} and standard facts about
singular support.

\end{proof}

\begin{lem}\label{l:defect-colim}

Holonomic defect is preserved under filtered colimits,
and $\sF \in D(X)^{\heart}$ has holonomic defect 
$\delta$ if and only if $\sF = \colim \sF_i$ with
$\sF_i$ coherent of holonomic defect $\delta$.

\end{lem}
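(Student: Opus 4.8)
The plan is to deduce both assertions from the two facts already in hand: that formation of $F_i^{GKS}$ commutes with filtered colimits (recorded just before Theorem \ref{t:gks}), and that holonomic defect is stable under passage to submodules (Lemma \ref{l:defect-sub/quot/ext}).

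First I would establish preservation under filtered colimits. Suppose $\sF = \colim_j \sF_j$ is a filtered colimit in $D(X)^{\heart}$ with each $\sF_j$ of holonomic defect $\delta$, i.e.\ $F_{\delta}^{GKS}\sF_j = \sF_j$. Since $F_{\delta}^{GKS}$ commutes with filtered colimits, $F_{\delta}^{GKS}\sF = \colim_j F_{\delta}^{GKS}\sF_j = \colim_j \sF_j = \sF$, so $\sF$ has holonomic defect $\delta$. Specializing to the case where the $\sF_j$ are coherent immediately yields the ``if'' direction of the second claim.

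For the ``only if'' direction, the idea is to present $\sF$ by its coherent submodules. The abelian category $D(X)^{\heart}$ is Grothendieck and locally Noetherian, with Noetherian (equivalently coherent) objects the coherent $D$-modules; hence any $\sF$ is the filtered colimit (union) of the directed system of its coherent submodules $\sF_i \subseteq \sF$. If $\sF$ has holonomic defect $\delta$, then by Lemma \ref{l:defect-sub/quot/ext} each submodule $\sF_i$ again has holonomic defect $\delta$, so $\sF = \colim_i \sF_i$ is the desired presentation.

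The only point needing care is the claim that $\sF$ is the filtered colimit of its coherent submodules, which I expect to be the main (though routine) obstacle. Concretely one reduces, by a Zariski-local closed embedding into a smooth variety, to the smooth affine case, where $D(X)^{\heart}$ is the category of modules over the (left and right) Noetherian ring $\Gamma(X, D_X)$; there every module is the union of its finitely generated submodules, each of which is coherent since $D_X$ is Noetherian, and the system is filtered because the sum of two finitely generated submodules is again finitely generated.
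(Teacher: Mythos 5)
Your proof is correct, and its first half is identical to the paper's: since $F_{\dot}^{GKS}$ commutes with filtered colimits, a filtered colimit of objects satisfying $F_{\delta}^{GKS}\sF_j = \sF_j$ satisfies the same identity. For the second half (the ``only if'' direction) you take a genuinely different route. The paper starts from an \emph{arbitrary} filtered presentation $\sF = \colim_i \sF_i^{\prime}$ with $\sF_i^{\prime}$ coherent --- available from the compact generation $D(X) = \Ind(D(X)^c)$ recorded in the notation section, after applying $H^0$ termwise --- and then \emph{corrects} the terms by setting $\sF_i = F_{\delta}^{GKS}\sF_i^{\prime}$: functoriality of the GKS filtration makes this a filtered system, each $\sF_i$ is coherent of defect $\delta$ (by the maximality characterization coming from Theorem \ref{t:gks}), and commutation with colimits gives $\colim_i \sF_i = F_{\delta}^{GKS}\sF = \sF$. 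You instead present $\sF$ by its coherent \emph{submodules} and invoke Lemma \ref{l:defect-sub/quot/ext} to see that each of them has defect $\delta$. Both arguments are sound, and they lean on closely related consequences of Theorem \ref{t:gks} (maximality of $F_{\delta}^{GKS}$ versus closure under submodules). The trade-off is that the paper's termwise correction needs nothing beyond compact generation, which is already on record, whereas your route rests on the stronger categorical fact that $D(X)^{\heart}$ is locally Noetherian, i.e., that every $D$-module is the filtered union of its coherent submodules --- exactly the point you flag as the routine obstacle. That fact is indeed standard, but your one-sentence reduction to the smooth affine case glosses a small step for non-affine $X$: one must extend coherent submodules across opens (say, by taking the $D$-submodule generated by an extension of a coherent $\sO_X$-generating subsheaf), since submodules do not localize for free. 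The paper's maneuver sidesteps this entirely, which is presumably why it is phrased the way it is.
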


\begin{proof}

The first part is clear since formation of $F_{\dot}^{GKS}$ commutes
with filtered colimits. For the second part,
write $\sF = \colim_i \sF_i^{\prime}$ with 
$\sF_i^{\prime}$ coherent, and then set 
$\sF_i = F_{\delta}^{GKS} \sF_i^{\prime}$.

\end{proof}

\subsection{}\label{ss:defect-cplx}

More generally, for $\sF \in D(X)$ a complex of $D$-modules, 
we say that $\sF$ has
\emph{holonomic defect} $\delta$ if all of its cohomology groups do.
By Lemmas \ref{l:defect-sub/quot/ext} and \ref{l:defect-colim},
this defines a DG subcategory of $D(X)$ closed under colimits.

\subsection{}

The following is the main result of this section.

\begin{thm}\label{t:hol}

Holonomic defect is preserved under $D$-module operations.
That is, if $f:X \to Y$ is a morphism and $\sF \in D(X)$
(resp. $\sG \in D(Y)$) has holonomic defect $\delta$,
then $f_{*,dR}(\sF)$ (resp. $f^!(\sG)$) does as well.
Moreover, for $\sF$ coherent as above, $\bD \sF$ has holonomic
defect $\delta$ as well.

\end{thm}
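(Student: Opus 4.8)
The plan is to funnel all three assertions into a single statement about pushforward along open embeddings, after a sequence of standard reductions. Throughout I work Zariski-locally on source and target and use freely that formation of the GKS filtration commutes with open restriction and with pushforward along closed embeddings; since closed pushforward is $t$-exact and fully faithful, it moreover reflects holonomic defect. I also use that the complexes of holonomic defect $\delta$ form a DG subcategory closed under colimits (\S\ref{ss:defect-cplx}), hence under fibers and cones. These facts let me replace $X$ and $Y$ by smooth ambient spaces into which affine charts embed as closed subvarieties, reducing the singular-support computations to the situation of Theorem \ref{t:gks}. Since $f_{*,dR}$, $f^!$ and $\bD$ are exact, it suffices to verify each claim for a single $D$-module in the heart, the general case following from closure under colimits.

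I would dispose of Verdier duality first, as it is self-contained. For $\sF$ coherent we reduce to smooth $X$ by a Zariski-local closed embedding into affine space, using that $\bD$ commutes with closed pushforward. On smooth $X$ the singular support is insensitive to duality, $\SS(\bD \sF) = \SS(\sF)$, and the singular support of a complex is the union of those of its cohomology modules; hence every $H^j(\bD \sF)$ has $\dim \SS \leq \dim X + \delta$, which is holonomic defect $\delta$ by Theorem \ref{t:gks}. For $f^!$, factor $f$ as a closed embedding $i \colon X \into \bA^N \times Y$ over $Y$ followed by the smooth projection $p \colon \bA^N \times Y \to Y$, so that $f^! = i^! p^!$. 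The pullback $p^!$ is exact up to a shift by the relative dimension and pulls singular support back along a smooth map, so it preserves defect. For $i^!$, let $j$ denote the open complement of $i$ and use the localization triangle
\[
i_{*,dR} i^! \sG \to \sG \to j_{*,dR} j^! \sG \xrightarrow{+1}.
\]
Here $j^! \sG$ has defect $\delta$ by open restriction, $j_{*,dR} j^! \sG$ has defect $\delta$ by the open-embedding case below, and $\sG$ has defect $\delta$ by hypothesis, so closure under fibers forces $i_{*,dR} i^! \sG$ to have defect $\delta$; as closed pushforward reflects defect, so does $i^! \sG$.

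The case of $f_{*,dR}$ uses the same factorization $f = p \circ i$, whence $f_{*,dR} = p_{*,dR}\, i_{*,dR}$. The closed pushforward $i_{*,dR}$ preserves defect, so it remains to treat the projection $p$. Writing $p$ as an iterated composite of projections of the form $\bA^1 \times Z \to Z$ and compactifying each factor via $\bA^1 \into \bP^1$, I reduce $p_{*,dR}$ to two operations: the proper pushforward along $\bP^1 \times Z \to Z$, controlled by the standard characteristic-variety estimate for proper maps (the drop of one in the dimension of singular support matching the drop in $\dim$; see \cite{kashiwara}), and the pushforward along the open embedding $\bA^1 \times Z \into \bP^1 \times Z$.

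Every branch of the argument therefore reduces to the assertion that pushforward along an open embedding preserves holonomic defect, which is the generalized $b$-function lemma (Lemma \ref{l:b-fn}) and the one genuinely hard point. In the holonomic case ($\delta = 0$) this is classical and proceeds through the Bernstein--Sato polynomial and finite-length arguments; the obstacle here is exactly that a module of positive defect need not have finite length, so one cannot argue along a composition series. I expect instead to argue directly that the singular-support dimension bound survives the pushforward, inducting along the GKS filtration, and I expect this lemma --- not the formal reductions above --- to carry all the real content.
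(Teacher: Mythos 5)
Your reduction architecture matches the paper's almost step for step: every assertion is funneled into pushforward along open embeddings (Lemma \ref{l:b-fn}), Verdier duality is handled on a smooth ambient space via invariance of singular support, $f^!$ is handled by a graph-type factorization into a closed embedding and a smooth projection plus the localization triangle, and $f_{*,dR}$ by a closed embedding over $Y$ followed by a projection; deferring Lemma \ref{l:b-fn} itself is consistent with the paper, which also proves it separately and only cites it in the proof of Theorem \ref{t:hol}. The one genuinely different move is your treatment of the projection: the paper invokes Nagata compactification together with resolution of singularities (or de Jong alterations) to factor an arbitrary morphism of smooth varieties as an open embedding followed by a proper map, whereas you compactify one coordinate at a time via $\bA^1 \subset \bP^1$, so the only proper pushforward ever needed is $\bP^1 \times Z \to Z$. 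That is more elementary and self-contained, and nothing is lost, since the Cech/affine reductions (which both you and the paper use, and which are what make ``Zariski-locally on the source'' legitimate for a pushforward) already place you in the situation where the graph embedding exists with smooth $Z$.

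There is, however, one real gap, in the proper step. You assert that for $\bP^1 \times Z \to Z$ the dimension of singular support drops by one ``by the standard characteristic-variety estimate; see \cite{kashiwara}.'' That estimate only gives a containment: with $\alpha: T^*Y \times_Y X \to T^*X$ and $\beta: T^*Y \times_Y X \to T^*Y$, one has $\SS(H^k f_{*,dR} \sF) \subset \beta(\alpha^{-1} \SS(\sF))$. For a general closed conic subvariety this containment does \emph{not} imply the dimension drop. Concretely, for the projection $f:\bA^2 \to \bA^1$, $(x_1,x_2)\mapsto x_1$ (which sits inside $\bP^1 \times \bA^1 \to \bA^1$), take the conic two-plane $\La = \{x_2 = \xi_2 = 0\} \subset T^*\bA^2$: then $\dim \La = \dim X$, but $\alpha^{-1}(\La) = \{x_2 = 0\}$ and $\beta(\alpha^{-1}\La) = T^*\bA^1$ is two-dimensional, so the estimate alone would only yield defect $1$ from defect $0$. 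What rescues the argument is that this $\La$ is a symplectic, hence non-coisotropic, plane, and by Gabber's involutivity theorem \cite{gabber} it cannot be the singular support of any coherent $D$-module; for \emph{coisotropic} $\La$ the bound $\dim \beta(\alpha^{-1}\La) \leq \dim \La + \dim Y - \dim X$ does follow from linear symplectic geometry. This is exactly the input the paper invokes explicitly alongside Kashiwara's estimate, and your proof needs it too; it is not packaged inside the estimate itself. In the holonomic case one never notices, since Lagrangian varieties are coisotropic, but in the positive-defect setting involutivity is doing real work.
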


This theorem generalizes the preservation of holonomic objects
under $D$-module operations, so the proof must follow similar lines. 
It is given below.

\subsection{Verdier duality}

The compatibility with Verdier duality in Theorem \ref{t:hol}
is well-known. Indeed, the result immediately reduces to 
$X$ being smooth, and then we have:

\begin{prop}\label{p:dual-ss}

For $\sF \in D(X)^{\heart}$ with singular support of dimension 
$\leq \dim X+i$, we have $H^{-j} \bD \sF = 0$ unless
$0\leq j \leq i$. Moreover, $H^{-j} \bD \sF$ has holonomic defect $j$.

\end{prop}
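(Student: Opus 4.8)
The plan is to prove Proposition \ref{p:dual-ss} on smooth $X$ by relating the singular support condition on $\sF$ to the standard Sato--Kashiwara estimate on the cohomology of the Verdier dual. Recall that for $X$ smooth, $\bD \sF$ may be computed via the holonomic dual twisted by the dualizing complex; concretely, $H^{-j}\bD\sF$ is governed by the $\Ext$-groups $\sExt^{j}_{\cD_X}(\sF, \cD_X)$ (up to a side-changing twist by $\Omega_X$), where $\cD_X$ is the sheaf of differential operators. The fundamental input is the purity/codimension estimate of Sato--Kashiwara: if $\sF$ is coherent with characteristic variety $\on{Ch}(\sF) \subseteq T^*X$ of dimension $\leq \dim X + i$, then the grade (homological codimension) of $\sF$ over $\cD_X$ is $\geq \dim X - i$, which forces $\sExt^{j}_{\cD_X}(\sF,\cD_X) = 0$ for $j < \dim X - i$. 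After the cohomological-shift bookkeeping that passes from these $\Ext$-sheaves to the $H^{-j}\bD\sF$, this vanishing is exactly the assertion that $H^{-j}\bD\sF = 0$ for $j > i$; the vanishing for $j < 0$ is automatic since $\bD\sF$ sits in nonpositive degrees on a smooth variety.

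The second assertion, that $H^{-j}\bD\sF$ has holonomic defect $j$, is the substantive point. By Theorem \ref{t:gks} this is equivalent to showing that the characteristic variety of $H^{-j}\bD\sF$ has dimension $\leq \dim X + j$. The key tool is again due to Sato--Kashiwara--Gabber: the characteristic variety of the $\Ext$-sheaf $\sExt^{j}_{\cD_X}(\sF,\cD_X)$ is contained in $\on{Ch}(\sF)$, and moreover the piece in codimension exactly governing $H^{-j}\bD\sF$ is supported on the locus of $\on{Ch}(\sF)$ of the appropriate codimension. First I would invoke the general fact that $\on{Ch}\big(\sExt^{j}_{\cD_X}(\sF,\cD_X)\big) \subseteq \on{Ch}(\sF)$, so in particular $\dim \on{Ch}(H^{-j}\bD\sF) \leq \dim \on{Ch}(\sF) \leq \dim X + i$. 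This alone gives holonomic defect $i$ for every cohomology sheaf, but we want the sharper statement that the $j$-th dual cohomology has defect $j$, which is a strictly better bound when $j < i$.

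To obtain the sharp bound I would use Gabber's theorem on the purity of the characteristic variety together with the codimension filtration: the relevant statement is that each $\sExt^{j}_{\cD_X}(\sF,\cD_X)$ is supported, in the sense of singular support, on the union of the irreducible components of $\on{Ch}(\sF)$ of codimension $\geq \dim X - j$ (equivalently, dimension $\leq \dim X + j$) in $T^*X$. This is exactly the Cohen--Macaulay/grade-theoretic input that underlies the biduality spectral sequence for $\cD$-modules: the $j$-th dual sees only the part of the characteristic cycle living in dimension $\leq \dim X + j$. Granting this, $\on{Ch}(H^{-j}\bD\sF)$ has dimension $\leq \dim X + j$, and Theorem \ref{t:gks} then yields holonomic defect $j$ as claimed. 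The main obstacle is pinning down precisely this dimension estimate on the dual cohomology sheaves: the containment $\on{Ch}(\sExt^j) \subseteq \on{Ch}(\sF)$ is elementary, but the refinement to dimension $\leq \dim X + j$ requires the genuinely deeper grade-purity results of Sato--Kashiwara and Gabber. I would therefore either cite these directly or reduce to them via the standard filtration on $\cD_X$ by order of operators, passing to the associated graded $\on{gr}\,\cD_X = \cO_{T^*X}$ and applying the commutative-algebra statement that for a coherent $\cO_{T^*X}$-module $M$ one has $\codim \on{Supp}\,\sExt^j_{\cO}(M,\cO) \geq j$, with the characteristic-variety bound following by a standard good-filtration argument.
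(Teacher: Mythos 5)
Your proposal is correct and is essentially the paper's own approach: the paper simply cites Kashiwara (Theorem 2.3), i.e., the Sato--Kashiwara grade/codimension estimates for $\Ext^{\dot}_{D_X}(\sF,D_X)$, which is exactly the input you invoke (and whose proof via good filtrations and the commutative-algebra grade bound over $\cO_{T^*X}$ you sketch). The only point to tidy is the indexing --- it is $\Ext^{\dim X - j}_{D_X}(\sF,D_X)$ (suitably twisted), not $\Ext^{j}$, that computes $H^{-j}\bD\sF$ --- but the bounds you ultimately extract (vanishing outside $0 \leq j \leq i$ and characteristic variety of dimension $\leq \dim X + j$, hence defect $j$ by Theorem \ref{t:gks}) are the correct ones.
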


See e.g. \cite{kashiwara} Theorem 2.3.

\subsection{Affine open embeddings}

The main case of Theorem \ref{t:hol} is pushforward along an
open embedding.

\begin{lem}\label{l:b-fn}

Let $X$ be a connected, smooth variety and let $f:X \to \bA^1$ be 
a function.
Let $U = \{f \neq 0\}$ be the corresponding basic open and let
$j:U \into X$ denote the corresponding affine open embedding.

Then $j_{*,dR}$ preserves holonomic defect.

\end{lem}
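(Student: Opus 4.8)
The plan is to reduce the statement to a uniform bound on the dimension of singular supports of localizations, and then to establish that bound by a relative Bernstein-type argument in one auxiliary variable, substituting a dimension count for the finite-length input of the classical $b$-function lemma. Working Zariski-locally on $X$, I would first assume $X$ affine, so that the ring $\cD_X$ of differential operators and good filtrations are available globally. Since $j$ is an affine open embedding, $j_{*,dR}$ is computed by the naive (hence exact, colimit-preserving) pushforward; thus for $\sF\in D(U)^\heart$ it lands in $D(X)^\heart$ and commutes with filtered colimits, which by Lemma \ref{l:defect-colim} reduces us to $\sF$ coherent. Choosing any coherent extension of $\sF$ to a $\cD_X$-module and replacing it by its $F_\delta^{GKS}$ --- legitimate because $F_\bullet^{GKS}$ commutes with open restriction --- produces a coherent $N\in D(X)^\heart$ of holonomic defect $\delta$ with $j^! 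N\cong\sF$, whence $j_{*,dR}\sF\cong N[1/f]$. Finally, writing $N[1/f]=\colim_k N_k$ with $N_k\coloneqq\cD_X\cdot(f^{-k}N)$ coherent and the transition maps the evident $\cD_X$-linear inclusions, Lemma \ref{l:defect-colim} together with Theorem \ref{t:gks} reduces everything to the single estimate $\dim\SS(N_k)\le\dim X+\delta$, valid for all $k$.

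To obtain this uniform estimate I would pass to the relative module $\cM_s\coloneqq N[1/f]f^s$ over $\cD_X[s]$ and its finitely generated submodule $\cN_s\coloneqq\cD_X[s]\cdot(Nf^s)$. A good filtration on $N$ induces one on $\cN_s$, and the resulting bound on the relative characteristic variety --- the upper half of the Bernstein inequality, which needs no holonomicity --- reads $\dim\SS_{\cD_X[s]}(\cN_s)\le\dim\SS(N)+1\le\dim X+\delta+1$ inside $T^*X\times\bA^1_s$. The specialization at $s=-k$, sending $f^s\mapsto f^{-k}$, is $\cD_X$-linear and produces a surjection $\cN_s/(s+k)\cN_s\onto N_k$, so that $\SS(N_k)$ lies in the image of $\SS_{\cD_X[s]}(\cN_s)\cap\{s=-k\}$. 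Since $\cM_s$ is free over $\bC[s]$, the central element $s+k$ acts injectively on $\cN_s$; granting that cutting the relative characteristic variety by the single function $s+k$ genuinely lowers its dimension by one --- the one delicate point, discussed below --- we obtain $\dim\SS(N_k)\le\dim X+\delta$, and passing to the colimit over $k$ completes the argument.

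The crux is precisely this dimension drop, and it is where the argument parts ways with the holonomic theory. Classically $\cN_s$ is holonomic over $\cD_X[s]$, hence of finite length, and a genuine polynomial functional equation $b(s)\cdot Nf^s\subseteq\cD_X[s]\cdot fNf^s$ results; in the present generality $\cN_s$ need not be holonomic and no such polynomial need exist --- already for $N=\cD_X/\cD_X(t\partial_t-x)$ along $f=t$ the putative $b$-function would have to vanish at the non-constant value $x$ --- so finite-length methods are simply unavailable. What remains true, and is all that is required, is the dimensional statement: no irreducible component of $\SS_{\cD_X[s]}(\cN_s)$ may lie inside a hyperplane $\{s=-k\}$, equivalently the associated graded of $\cN_s$ carries no $\bC[s]$-torsion component supported at an integer. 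I would secure this by replacing $\cN_s$ with its $\bC[s]$-saturation inside the torsion-free module $\cM_s$ (which keeps the relative characteristic variety within the same dimension) and then tracking the grade through the Rees change-of-rings isomorphism $\Ext^{\bullet}_{\cD_X}(-,\cD_X)\cong\Ext^{\bullet+1}_{\cD_X[s]}(-,\cD_X[s])$ attached to the central nonzerodivisor $s+k$; this renders the drop in dimension formal and removes any appeal to finite length.
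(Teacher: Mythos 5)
Your overall architecture is viable and genuinely different from the paper's (which bounds only the \emph{generic} fiber of the relative module via a Galois-twist $\lambda\mapsto\lambda+1$ acting on the GKS filtration, and then needs only \emph{one} well-chosen integer $-N$, because $f^{-N}s\in F^{GKS}_{\delta}\sF$ already forces $s\in F^{GKS}_{\delta}\sF$; you instead need cofinally many integer specializations). But as written your proof has a genuine gap at its central claim. The assertion that ``a good filtration on $N$ induces one on $\cN_s$'' with
$\dim\SS_{\cD_X[s]}(\cN_s)\le\dim\SS(N)+1$
is precisely where the entire difficulty of the lemma lives, and the justification offered does not produce it. If you filter $\cN_s$ by $\Gamma_j=F_j\cD_X[s]\cdot(N_0f^s)$ for a coherent generating subsheaf $N_0$, then every relation you can write down over the divisor $\{f=0\}$ is a multiple of $f$ (e.g.\ $(f\xi-s\,\xi(f))\cdot\overline{nf^s}=\overline{f\xi(n)f^s}$), so its symbol vanishes there and the induced filtration gives \emph{no} bound on the support of $\gr\cN_s$ over $\{f=0\}$ --- a priori it can be all of $T^*X|_{\{f=0\}}\times\bA^1_s$, of dimension $2\dim X$. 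To get the dimension count to work one needs Bernstein's rescaled filtration, roughly $\Gamma_j=f^{-j}\bigl(B_{Cj}N\bigr)[s]_{\deg_s\le j}\,f^s$, whose pieces must be \emph{finite-dimensional}; this forces the Bernstein filtration, hence a reduction to a closed embedding into $\bA^n$ (which you never make --- on a general smooth affine $X$ the counting argument is unavailable), together with the nontrivial input $\dim_k B_jN=O(j^{\dim\SS(N)})$, i.e.\ the equality of Gelfand--Kirillov dimension with the dimension of the characteristic variety. Your opening sentence gestures at exactly this (``a dimension count for the finite-length input''), and I believe the argument can be completed along these lines, but the one sentence you devote to the bound is the step where the classical proof uses holonomicity and where the paper instead deploys its Galois-twist argument; asserting it as an ``induced filtration'' plus ``the upper half of the Bernstein inequality'' begs the question.

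The secondary step also needs repair, though here the statement you want is true. Given the full bound $\dim\SS_{\cD_X[s]}(\cN_s)\le\dim X+\delta+1$ (vertical components included), the estimate $\dim\SS\bigl(\cN_s/(s+k)\cN_s\bigr)\le\dim X+\delta$ follows from the additivity of characteristic cycles applied to $0\to\cN_s\xrightarrow{s+k}\cN_s\to\cN_s/(s+k)\to0$: the induced good filtrations give an exact sequence on associated gradeds, and since $(s+k)\cN_s\cong\cN_s$, the multiplicity of $\gr(\cN_s/(s+k))$ along every irreducible component of $\SS_{\cD_X[s]}(\cN_s)$ is zero, so its support has strictly smaller dimension. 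Your proposed mechanism is shakier than this: $\cN_s$ is already $k[s]$-torsion-free (it sits in $\cM_s$), and neither torsion-freeness nor saturation rules out vertical components of the \emph{characteristic variety} --- the failure of strictness of the filtration can create $s$-torsion in $\gr\cN_s$ that is invisible in $\cN_s$ itself; moreover the Rees change-of-rings route requires $s+k$ to act injectively on $\Ext^{j}_{\cD_X[s]}(\cN_s,\cD_X[s])$ in the critical degree $j$, which is not automatic from injectivity on $\cN_s$. Fortunately, absence of vertical components is not actually what you need: the multiplicity argument above bounds the quotient's characteristic variety directly, without deciding whether vertical components exist. I would replace your third paragraph by that argument, and devote the recovered space to an honest proof of the relative bound.
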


\begin{proof}

\step

We may obviously assume $X$ is connected and affine
and that $f$ is non-constant.

We abuse notation slightly in letting
$D_X$ and $D_U$ denote the respective rings of differential
operators (as opposed to the sheaves of differential operators). 

Let $\sF$ be a $D_U$-module. Because we are working
with modules rather than sheaves, considering $\sF$ as a 
$D_X$-module by restriction is the same as considering the sheaf
$j_{*,dR}(\sF) \in D(X)^{\heart}$.

For $s \in \sF$, we write $\SS_U(s) \subset T^* U$ for the
singular support of $D_U \cdot s$ and $\SS_X(s) \subset T^* X$
for the singular support of $D_X \cdot s$. Note that
$\SS_X(s)|_{T^* U} = \SS_U(s)$. We always understand
singular support as a \emph{reduced} subscheme. 

We want to show that if every section $s \in \sF$ has
$\dim \SS_U(s) \leq \dim U + \delta = \dim X + \delta$, then
the same is true of $\dim SS_X(s)$.

\step\label{st:gks}

First, we observe that there is a $D_X$-submodule
$\sG \subset \sF$ such that
every section of $\sG$ has singular support 
with dimension $\leq \dim X + \delta$, and
which is a \emph{lattice}, i.e.,
$\sG \otimes_{\sO_X} \sO_U \isom \sF$.

Indeed, we can take $\sG = F_{\delta}^{GKS} \sF$, where
the GKS filtration is with $\sF$ considered as a $D_X$-module.
Because the GKS filtration commutes with open restriction,
we must have $\sF_0|_U = \sF$.

(Note that by Theorem \ref{t:gks}, we are trying to show that $\sG = \sF$.)

\step

Let $\lambda$ be an indeterminate. We write
$\bA_{\lambda}^1$ for $\Spec(k[\lambda])$. 
We let $k^{\prime}$ denote the fraction field $k(\lambda)$ 
of $k[\lambda]$. We use similar
notation for a base-change to $k^{\prime}$; e.g., $X^{\prime}$, or
$\sF^{\prime}$, etc. We always consider $X^{\prime}$ and
$U^{\prime}$ as schemes over $k^{\prime}$, so
e.g. their cotangent bundles are understood
relative to $k^{\prime}$, and $D_X^{\prime} = D_{X^{\prime}}$.

Recall that $U \coloneqq \{f \neq 0\}$. 
Then we have the $D_U^{\prime}$-module 
$``f^{\lambda}" \cdot \sF^{\prime}$, the tensor product
of the usual $D$-module $``f^{\lambda}"$ with $\sF^{\prime}$.

\step 

We first show that the result is true for
$``f^{\lambda}" \cdot \sF^{\prime}$, i.e., that every section
has $SS_{X^{\prime}}$ with dimension $\leq \dim X + \delta$.

First, note that the singular support in $U^{\prime}$ of any section
has dimension $\leq \dim X + \delta$: this follows because
$``f^{\lambda}"$ is lisse on $U^{\prime}$.

We have a canonical element of the Galois group
$\gamma \in \Gal(k^{\prime}/k)$ sending $\lambda \mapsto \lambda+1$.
Of course, anything obtained by extension of scalars from 
$k$ to $k^{\prime}$ also carries such an automorphism $\gamma$, in 
particular, $D_X^{\prime}$ does
(it sends a differential operator $P(\lambda)$ to $P(\lambda+1)$).

Similarly, $\sF^{\prime}$ has such an automorphism:
note that this is not an automorphism as a $D_X^{\prime}$-module,
but rather intertwines the standard action with the one obtained
by twisting by the automorphism $\gamma$ of $D_X^{\prime}$.
That is, $\gamma(P \cdot s) = \gamma(P) \cdot \gamma(s)$
for $P \in D_X^{\prime}$ and $s \in \sF^{\prime}$.

Define $\gamma$ on the $D_X^{\prime}$-module
$``f^{\lambda}"$ by setting:

\[
\gamma(``f^{\lambda}" \cdot g) = 
``f^{\lambda+1}" \cdot \gamma(g) \coloneqq 
``f^{\lambda}" \cdot f \cdot \gamma(g)
\]

\noindent for $g$ a function on $X^{\prime}$.
Again, this morphism intertwines the actions of $D_X^{\prime}$
up to the automorphism $\gamma$ of $D_X^{\prime}$.

Tensoring, we obtain an automorphism $\gamma$ of
$``f^{\lambda}" \cdot \sF^{\prime}$ with similar semi-linearity. 

By the semi-linearity, we have:

\[
\SS_{X^{\prime}}(``f^{\lambda}" \cdot s) = \gamma \cdot \SS_{X^{\prime}}(\gamma(``f^{\lambda}" \cdot s))
\]

\noindent where we are using $\gamma$ to indicate the
induced automorphism of $T^* X^{\prime}$.
In particular, we find that 
$\dim \SS_{X^{\prime}} (``f^{\lambda}" \cdot s) = \dim \SS_{X^{\prime}} (\gamma(``f^{\lambda}" \cdot s))$.

Now let $\sG = F_{\delta}^{GKS} (``f^{\lambda}" \cdot \sF^{\prime})$,\footnote{So
this is a different $\sG$ from Step \ref{st:gks}, i.e., we are applying
the same construction to a different $D$-module.}
where the GKS filtration is taken with $``f^{\lambda}" \cdot \sF^{\prime}$
considered as a $D_X^{\prime}$-module.
By the above and Theorem \ref{t:gks}, $``f^{\lambda}" \cdot s \in \sG$ if and 
only if $\gamma(``f^{\lambda}" \cdot s) \in \sG$.

For any $s \in \sF$ (as opposed to $\sF^{\prime}$),
we clearly have $\gamma(``f^{\lambda}" \cdot s) = f^{\lambda+1} \cdot s$.
Since $\sG$ is a lattice (by Step \ref{st:gks}), 
$\gamma^N(s) = f^{\lambda+N} s  \in \sG$ for $N \gg 0$. 
But by the above, this means that $s \in \sG$. Since 
$``f^{\lambda}" \cdot \sF^{\prime}$ is $k^{\prime}$-spanned by
such vectors, this means that $\sG = ``f^{\lambda}" \cdot \sF^{\prime}$, 
as desired.

\step

We now show that the result is true for our
original $\sF$. Let $s \in \sF$;
we want to show $\dim \SS_X(s) \leq \dim X+\delta$.

We now write $``f^{\lambda}" \cdot \sF$ for the corresponding
$D_X[\lambda]$-module (as opposed to the fiber over the generic
point in $\bA_{\lambda}^1$, which is what we called by this name
previously). Note that $``f^{\lambda} \cdot \sF" = \sF \otimes_k k[\lambda]$ as a $\sO_X[\lambda]$-module.

Let $\sF_0$ be the $D_X[\lambda]$ submodule generated by 
$``f^{\lambda}" \cdot s$. Give $\sF_0$ the filtration
$F_i \sF_0 = D_X^{\leq i}[\lambda] \cdot ``f^{\lambda}" \cdot s$,
where $D_X^{\leq i}$ are differential operators of order $\leq i$.

Then $\gr_{\dot}(\sF_0)$ is the structure
sheaf of some closed subscheme
$Z \subset T^*X \times \bA_{\lambda}^1$. 
We have seen that the base-change of $Z$ to the generic
point of $\bA_{\lambda}^1$ has dimension $\leq \dim X + \delta$,
so the same is true for its fibers at closed points with 
only finitely many possible exceptions.

Choose a negative integer $-N$ not among this finite number of
exceptions. Then the coherent $D_X$-module $\sF_0/(\lambda+N)$
has singular support contained in 
$Z \times_{\bA_{\lambda}^1} \{-N\}$, so has dimension
$\leq \dim X + \delta$. We have the obvious morphism
of $D_U$-modules (in particular, of $D_X$-modules):

\[
\begin{gathered}
\big(``f^{\lambda}" \cdot \sF\big)/(\lambda+N) \to \sF \\
\sum_{i = 0}^r ``f^{\lambda}" \cdot \sigma_i \lambda^i \mapsto  
\sum_{i = 0}^r f^{-N} \cdot \sigma_i \cdot (-N)^i 
\end{gathered}
\]

\noindent induces a map $\sF_0/(\lambda+N)$ to 
$\sF$ sending the generator to $f^{-N} s$. By functoriality
of the GKS filtration (or standard singular support analysis),
this means that $f^{-N} s \in F_{\delta}^{GKS} \sF$,
and since $F_{\delta}^{GKS} \sF$ is a $D_X$-module, this
means that $s \in F_{\delta}^{GKS} \sF$ as well.

\end{proof}
  
\subsection{Preservation of holonomic defect}

We now proceed to prove Theorem \ref{t:hol}. The argument 
is straightforward at this point, and we proceed in cases.

\subsection{}

First, we treat pushforwards along an open embedding $j:U \to X$.

For $X$ smooth, a Cech argument reduces us to
the case of a basic open, which is treated in 
Lemma \ref{l:b-fn}. (Recall from \S \ref{ss:defect-cplx} that
$D$-modules with holonomic defect $\delta$ are closed under cones.)

For possibly non-smooth $X$, note that the problem is Zariski
local, so we may assume $X$ is affine.
Take a closed embedding $X \subset \bA^N$.
If $U = X \setminus Z$, then 
we have $U \into \bA^N\setminus Z \into \bA^N$ with
the first map being closed and the second being open. Therefore,
this pushforward preserves holonomic defect. Clearly
this implies the result for the pushforward along $U \into X$.

\subsection{}

Next, we treat restrictions to closed subschemes.

Let $i:Z \into X$ be closed and let $j:U = X\setminus Z \into X$.
Then we have an exact triangle:

\[
i_{*,dR} i^!(\sF) \to \sF \to j_{*,dR} j^!(\sF) \xar{+1}.
\]

\noindent If $\sF$ has holonomic defect $\delta$, we have
shown the same for $j_{*,dR}j^!(\sF)$, so
$i_{*,dR}i^!(\sF)$ has holonomic defect $\delta$, which is
equivalent to $i^!(\sF)$ having holonomic defect $\delta$.

\subsection{}\label{ss:restr}

We can now show the result for restrictions in general.

If $f:X \to Y$ is smooth of relative
dimension $d$, then $f^{*,dR}[d] = f^![-d]$ commutes with
Verdier duality and is $t$-exact. Therefore, it commutes
with formation of the GKS filtration, and therefore 
preserves holonomic defect.

The case of general $f:X \to Y$ is immediately reduced to the
case of affine varieties (since holonomic defect is Zariski local).
We can find a commutative diagram:

\begin{equation}\label{eq:aff-f}
\vcenter{
\xymatrix{
X \ar[r] \ar[d]^f & \bA^{N_1} \ar[d]^g \\
Y \ar[r] & \bA^{N_2}
}}
\end{equation}

\noindent with the horizontal arrows being closed embeddings.
This reduces to the case where $X$ and $Y$ are smooth.

Then we can factor $f$ through the
graph as $X \to X \times Y \xar{p_1} Y$. The former map is a
closed embedding, and the latter is smooth because $X$ is.
We have treated each of these cases, so we obtain the result.

\subsection{}

Next, we treat pushforwards along a proper morphism
$f:X \to Y$ between smooth varieties.

This case does not need the work we have done so far.
Let $\sF \in D(X)^{\heart}$ with holonomic defect $\delta$
be given. By Lemma \ref{l:defect-colim}, we may
assume $\sF$ is coherent, so the hypothesis is that
$\sF$ has singular support $\SS_X(\sF)$ 
with dimension $\dim X + \delta$.

Recall that $\SS_Y(H^i(f_{*,dR}(\sF)))$ is bounded in
terms of $\SS_X(\sF)$. More precisely, if we take the diagram:

\[
\xymatrix{
T^* Y \underset{Y}{\times} X \ar[r]^{\alpha} \ar[d]^{\beta} & T^* X \\
T^* Y
}
\]

\noindent then the singular support of these cohomologies are
contained in $\alpha(\beta^{-1}\SS_X(\sF))$
(see e.g. \cite{kashiwara} Theorem 4.2).

Because $\SS(\sF)$ is coisotropic by \cite{gabber}, we have:

\[
\dim \alpha(\beta^{-1}\SS_X(\sF)) \leq 
\dim(\SS_X(\sF)) + \dim Y - \dim X
\]

\noindent by usual symplectic geometry. This immediately gives the claim.

\subsection{}

Now observe that preservation of holonomic defect under
pushforward along a general morphism
$f:X \to Y$ of smooth varieties follows: by Nagata and resolution 
of singularities,\footnote{Of course, one may 
easily use the more elementary
de Jong alterations instead.} 
we may find smooth $\ol{X}$ and a factorization:

\[
X \xar{j} \ol{X} \xar{\ol{f}} Y 
\]

\noindent of $f$ with $\ol{f}$ proper and $j$ an open embedding,
so we are reduced to our previous work.

\subsection{}

We can now treat a general pushforward 
along $f:X \to Y$ a morphism between possibly
singular varieties. 

Because we know pushforward along
open embeddings preserves holonomic defect, 
Cech reduces us to the case where $X$ and $Y$
are affine. Then we can find a commutative
diagram \eqref{eq:aff-f} as before. 
This reduces to the case with $X$ and $Y$ smooth, which
we have already treated.

\section{Cohomological bounds}\label{s:coh}

\subsection{}

The main result of this section says that $f_!$ is left $t$-exact 
for an affine morphism $f$. We also show that
for $i:X \to Y$ a closed embedding, $i^{*,dR}$ has cohomological
amplitude $\geq -\dim(Y)+\dim(X)$, i.e., 
$i^{*,dR}[-\dim(Y)+\dim(X)]$ is left $t$-exact.
Since $f_!$ and $i^{*,dR}$ are not defined on
every $D$-module (e.g., on non-holonomic ones), we
use the language of pro-categories to formulate this result.

\subsection{Pro-categories}\label{ss:pro-tstr}

For $\sC$ a\footnote{Really $\sC$ should be \emph{accessible}. 
Recall that this is a robust set-theoretic condition
satisfied by any small category and by any compactly generated
category. One should be aware that $\Pro(\sC)$ is 
almost never accessible itself.}
category, we have $\Pro(\sC)$ the corresponding
pro-category. If $\sC$ is a DG category,
$\Pro(\sC)$ is as well. If $\sC$ admits
small colimits, then so does $\Pro(\sC)$. 
For $F:\sC \to \sD$, there is an induced functor
$\Pro(\sC) \to \Pro(\sD)$, which we denote again by
$F$ where there is no risk for confusion.

For any functor $G:\sD \to \sC$ commuting with
finite colimits (e.g., a DG functor), the induced functor
$\Pro(\sD) \to \Pro(\sC)$ admits a left adjoint $F$.
We say that $F$ \emph{is defined} on an object
$\sF \in \sC$ if $F(\sF) \in \sD \subset \Pro(\sD)$. (This
coincides with the usual notion of a left adjoint being defined
on some object.)

If $\sC$ is a DG category equipped with a $t$-structure, the $\Pro(\sC)$
inherits one as well. It is characterized by the
equality $\Pro(\sC)^{\leq 0} = \Pro(\sC^{\leq 0})$.
Truncation functors are the pro-extensions of the truncation
functors on $\sC$. In particular, we find that $\sC$ is
closed under truncations and inherits its given $t$-structure.
We also find that 
$\Pro(\sC)^{\geq 0} = \Pro(\sC^{\geq 0})$:
if $\sF = \lim_i \sF_i \in \Pro(\sC)^{\geq 0}$, then
$\sF = \tau^{\geq 0} \sF = \lim_i \tau^{\geq 0} \sF_i$.

\subsection{Affine morphisms}

For $f:X \to Y$, we have the functor $f_!:\Pro(D(X)) \to \Pro(D(Y))$
left adjoint to $f^!$. 

\begin{thm}\label{t:aff}

For $f$ affine, the induced functor 
$f_!:D(X)^c \to \Pro(D(Y))$ is left $t$-exact.

\end{thm}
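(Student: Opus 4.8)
The plan is to reduce to the case of a morphism between smooth affine varieties and then to write $f_!$ through Verdier duality as $f_!\simeq\bD_Y\circ f_{*,dR}\circ\bD_X$, controlling the cohomological amplitude by propagating holonomic defect through the three functors with the help of Theorem \ref{t:hol} and Proposition \ref{p:dual-ss}. The naive hope that ``$f_{*,dR}$ right $t$-exact'' dualizes immediately to ``$f_!$ left $t$-exact'' fails because $\bD$ is not $t$-exact; the failure is measured precisely by holonomic defect, which is where the work of the previous section enters.

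First I would reduce to $X,Y$ smooth and affine. Left $t$-exactness of $f_!$ is local on $Y$: the $t$-structure on $\Pro(D(Y))$ is characterized locally (\S\ref{ss:pro-tstr}) and $f_!$ commutes with restriction to Zariski opens of $Y$ by base change, so we may take $Y$, and hence $X$, affine. Choosing closed embeddings $i_X\colon X\into\bA^{N_1}$ and $i_Y\colon Y\into\bA^{N_2}$ together with an affine extension $g\colon\bA^{N_1}\to\bA^{N_2}$ of $f$ (so that $g\circ i_X=i_Y\circ f$), functoriality of $!$-pushforward gives $i_{Y,!}\circ f_!\simeq g_!\circ i_{X,!}$. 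Since $i_{X,!}=i_{X,*,dR}$ and $i_{Y,!}=i_{Y,*,dR}$ are $t$-exact and preserve coherence, and since $i_{Y,*,dR}$ moreover reflects the property of being coconnective (Kashiwara's theorem), the statement for $f$ follows from the statement for $g$; thus we may assume $X=\bA^{N_1}$ and $Y=\bA^{N_2}$ are smooth.

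Now with $X,Y$ smooth and $f$ affine, write $f_!\simeq\bD_Y\circ f_{*,dR}\circ\bD_X$, where $\bD_X\colon D(X)^c\to D(X)^{c}$ and $\bD_Y\colon D(Y)\to\Pro(D(Y)^c)$ are Verdier duality (the latter being the pro-coherent incarnation from the remark of \S2.2, compatible with the $t$-structure of \S\ref{ss:pro-tstr}). The bookkeeping is organized by the following property of a bounded complex $\sM$: say $\sM$ satisfies $(\star)$ if $H^c(\sM)$ has holonomic defect $\le -c$ for every $c$; note this forces $\sM\in D^{\le 0}$. A d\'evissage to the cohomology modules of $\sF$, combined with Proposition \ref{p:dual-ss} (which puts $\bD_X$ of a defect-$\delta$ module in degree $0$ into degrees $[-\delta,0]$, with its degree $-j$ part of defect $j$) and Lemma \ref{l:defect-sub/quot/ext}, shows that $\bD_X$ carries $D(X)^c\cap D(X)^{\ge 0}$ into complexes satisfying $(\star)$. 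Read backwards, the same computation shows that $\bD_Y$ carries any $\sM$ satisfying $(\star)$ into $\Pro(D(Y))^{\ge 0}$: indeed $\bD_Y(H^c(\sM))[c]$ lands in degrees $[0,-c]$ by Proposition \ref{p:dual-ss} (extended to possibly non-coherent modules via the filtered-colimit compatibility of duality and Lemma \ref{l:defect-colim}), and these pieces assemble to a coconnective object.

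It therefore remains to show that $f_{*,dR}$ preserves $(\star)$, and this is the heart of the matter and the step I expect to be the main obstacle. Filtering $\sM$ by its cohomology, it suffices to treat a single module $H^{c_0}(\sM)$ placed in degree $c_0$ and of defect $\le -c_0$. Because $f$ is affine, $f_{*,dR}$ is right $t$-exact (e.g. by factoring $g$ through a graph closed embedding and a projection from affine space, whose relative de Rham complex sits in non-positive degrees), so $f_{*,dR}(H^{c_0}(\sM)[-c_0])$ lives in degrees $\le c_0$; and by Theorem \ref{t:hol} all of its cohomologies still have defect $\le -c_0$. Hence in every degree $c\le c_0$ the defect is $\le -c_0\le -c$, which is exactly $(\star)$, and closure of defect under extensions (Lemma \ref{l:defect-sub/quot/ext}) handles the general $\sM$. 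The essential input is thus Theorem \ref{t:hol}: preservation of holonomic defect under $f_{*,dR}$ is precisely what compensates for the downward spreading of degrees under an affine pushforward, forcing the defect bound to improve in step with the cohomological degree. Combining the three steps yields $f_!\sF=\bD_Y f_{*,dR}\bD_X\sF\in\Pro(D(Y))^{\ge 0}$, as desired.
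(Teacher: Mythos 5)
Your proposal is correct and takes essentially the same route as the paper's proof: reduce to smooth affine varieties via closed embeddings into affine spaces, write $f_! = \bD \circ f_{*,dR} \circ \bD$, and control cohomological degrees by propagating holonomic defect through Proposition \ref{p:dual-ss}, Theorem \ref{t:hol}, and the right $t$-exactness of $f_{*,dR}$ for affine $f$. Your invariant $(\star)$ is a clean repackaging of the paper's term-by-term d\'evissage of the canonical filtration of $\bD\sF$, but the mathematical content is identical.
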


\begin{proof}

The problem is\footnote{Indeed, if $Y = U_1 \cup U_2$
with embeddings $j_i:U_i \into Y$ and $j_{12}:U_1 \cap U_2 \into Y$,
then for $\sG \in \Pro(D(Y))$ with $j_i^!(\sG) \in \Pro(D(U_i))^{\geq 0}$,
we want to see that $\sG \in \Pro(D(Y))^{\geq 0}$.

Note that:

\[
\sG = \Ker\big(j_{1,*,dR}j_1^!(\sG)\oplus j_{2,*,dR}j_2^!(\sG) \to
j_{12,*,dR}j_{12}^!(\sG) \big).
\]

\noindent Indeed, this follows by pro-extension from the
corresponding fact for usual $D$-modules. 
Since $t$-exact functors induce $t$-exact functors on pro-categories
as well, we obviously obtain the claim.
}
Zariski local on $Y$, so we may assume
$X$ and $Y$ are affine. 

Note that $D$-module pushforward along
closed embeddings remains fully-faithful on pro-categories:
the identity $i^!i_{*,dR} = \id$ induces the same for the pro-functors.
Therefore, the same argument as in \S \ref{ss:restr} allows
us to assume $X$ and $Y$ are smooth.

Recall that we have a Verdier duality equivalence
$\bD:D(X) \isom \Pro(D(X)^c)$ induced by the usual Verdier
duality equivalence $\bD:D(X)^c \isom D(X)^{c,op}$,
and similarly for $Y$.

We then claim that: 

\[
f_!(\sF) = \bD f_{*,dR} \bD (\sF).
\]

\noindent This follows formally from the fact that 
$f_{*,dR}$ and $f^!$ are \emph{dual} 
functors in the sense of \cite{dgcat},
but here is a direct proof anyway.
Note that in this
formula, $f_{*,dR} \bD(\sF) \in D(Y)$, and we are then using
$\bD$ to convert it to a pro-coherent object.
Since this object is pro-coherent, 
it suffices to observe that for 
$\sG \in D(Y)^c$, we have:

\[
\begin{gathered}
\Hom_{\Pro(D(Y)^c)}(\bD f_{*,dR} \bD (\sF), \sG) =
\Hom_{D(Y)}(\bD \sG, f_{*,dR} \bD (\sF)) =
\Gamma_{dR}(Y, f_{*,dR} \bD(\sF) \overset{!}{\otimes} \sG) = \\
\Gamma_{dR}(X, \bD(\sF) \overset{!}{\otimes} f^!(\sG)) = 
\Hom_{D(X)}(\sF,f^!(\sG)).
\end{gathered}
\]

\noindent Here $\Gamma_{dR}$ is the complex of de Rham
cochains of a $D$-module, and we are repeatedly using
the formula that if $\sF_1$ is coherent, then:

\[
\Hom_{D(X)}(\sF_1,\sF_2) = 
\Gamma_{dR}(X, \bD(\sF_1) \overset{!}{\otimes} \sF_2).
\]

Note that $\bD \sF$ carries the canonical
filtration with subquotients $H^{-j} \bD \sF[-j]$.

By Proposition \ref{p:dual-ss},
$H^{-j} \bD \sF$ has holonomic defect $j$. 
By Theorem \ref{t:hol},
$f_{*,dR} H^{-j} \bD \sF$ has holonomic defect $j$ as well.
Moreover, by affineness of $f$, this latter complex is in cohomological
degrees $\leq 0$. 

Note that by Proposition \ref{p:dual-ss}, if
$\sG \in D(Y)^{\heart}$ has holonomic defect $\delta$, then
$\bD \sG \in \Pro(D(Y)^c)$ is in cohomological degrees
$[-\delta,0]$: indeed, this immediately reduces to the coherent
case.

Therefore, $\bD H^{-k} f_{*,dR} H^{-j} \bD \sF$ is in cohomological
degrees $[-j,0]$ for every $k$, which means
$\bD \big(H^{-k} (f_{*,dR} H^{-j} \bD \sF)[k]\big)$ is in cohomological
degrees $[-j+k,k]$. This complex vanishes unless $k\geq 0$,
so $\bD f_{*,dR} H^{-j} \bD \sF$
is in cohomological degrees $\geq -j$.
Finally, this means that $\bD \big((f_{*,dR} H^{-j} \bD \sF)[j]\big)$
is in cohomological degrees $\geq 0$, so the same follows
for $\bD f_{*,dR} \bD(\sF) = f_!(\sF)$.

\end{proof}

\begin{rem}

More generally, this argument shows that
if $f_*:\QCoh(X) \to \QCoh(Y)$ has amplitude $\leq n$,
then $f_!:D(X)^c \to \Pro(D(Y))$ has amplitude $\geq -n$.

\end{rem}

\subsection{Closed embeddings}

Similarly, we have:

\begin{thm}

For $i:X \to Y$ a closed embedding, 
$i^{*,dR}:D(Y)^c \to \Pro(D(X))$ has
cohomological amplitude $\geq -\dim(Y)+\dim(X)$.

\end{thm}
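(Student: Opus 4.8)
The plan is to imitate the proof of Theorem \ref{t:aff}, replacing the pair $(f_{*,dR},f_!)$ by $(i^!,i^{*,dR})$. As there, the assertion is Zariski-local on $Y$, so I may assume $X$ and $Y$ affine; and since $D$-module pushforward along a closed embedding remains fully faithful on pro-categories (because $i^!i_{*,dR}=\id$), the reduction of \S\ref{ss:restr} lets me assume in addition that $X$ and $Y$ are smooth, the relevant invariant being the codimension $c\coloneqq\dim(Y)-\dim(X)$. The key input is the Verdier-duality identity $i^{*,dR}(\sG)=\bD\, i^!\, \bD(\sG)$, valid because $i^{*,dR}$ and $i^!$ are dual functors in the sense of \cite{dgcat} (exactly as $f_!=\bD f_{*,dR}\bD$ was used above). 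Thus the task is to bound the cohomological amplitude of $i^!$ on Verdier duals and then dualize back.

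First I would reduce to a single coherent $D$-module $\sG\in D(Y)^{\heart}$ by dévissage along the canonical filtration. For such $\sG$, Proposition \ref{p:dual-ss} presents $\bD\sG$ with a finite canonical filtration whose subquotients $H^{-j}\bD\sG$ sit in degree $-j$ and have holonomic defect $j$. Applying $i^!$ and invoking Theorem \ref{t:hol}, each subquotient $i^!(H^{-j}\bD\sG)$ again has holonomic defect $j$; the new ingredient is that, $i$ being a closed embedding of codimension $c$, the functor $i^!$ carries $D(Y)^{\heart}$ into $D(X)^{[0,c]}$. Dualizing back, I would use the estimate already exploited in Theorem \ref{t:aff}, namely that $\bD$ sends a module placed in degree $m$ of holonomic defect $d$ into $\Pro(D(X))^{[-m-d,\,-m]}$. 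The subquotient coming from defect $j$ then contributes, through the intervening shift $[-j]$, terms in degrees $\geq -m\geq -c$, the defect index $j$ cancelling between the two Verdier dualities and leaving precisely the codimension as the bound. Assembling the finitely many subquotients yields $i^{*,dR}(\sG)\in\Pro(D(X))^{\geq -c}$, as desired.

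The main obstacle is the amplitude estimate $i^!\bigl(D(Y)^{\heart}\bigr)\subset D(X)^{[0,c]}$: isolating the codimension $c$ as the exact upper bound is what forces the shift by $-\dim(Y)+\dim(X)$ rather than the larger quantities produced by a careless embedding into affine space, for which the composite $i^{*,dR}=\bD i^!\bD$ only sees the codimension in the ambient smooth space. This estimate plays here the role that affineness (\emph{the output lies in degrees} $\leq 0$) played in Theorem \ref{t:aff}. In the smooth case it follows by factoring $i$ Zariski-locally into smooth divisors and noting that pushforward along the affine-open complement of a smooth divisor has amplitude $\leq 1$. The delicate point, as throughout this paper, is to keep this amplitude bound synchronized with the holonomic-defect bookkeeping of Theorem \ref{t:hol} and Proposition \ref{p:dual-ss}, so that the two dualities conspire to cancel the defect and leave the clean codimension shift; carrying this synchronization faithfully through the reduction to the smooth case is where the real work lies.
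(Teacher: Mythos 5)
Your core bookkeeping is exactly the paper's: write $i^{*,dR}=\bD i^!\bD$, run the d\'evissage of $\bD\sG$ by its cohomologies, apply Theorem \ref{t:hol} and Proposition \ref{p:dual-ss} on both sides of $i^!$, and feed in the amplitude bound $i^!(D(Y)^{\heart})\subset D(X)^{[0,c]}$ in place of the affineness input of Theorem \ref{t:aff}. But there is a genuine gap in your first reduction. The reduction of \S\ref{ss:restr} (and of Theorem \ref{t:aff}) proceeds by closed embeddings into affine spaces, and it works there only because the statement being proved (``amplitude $\geq 0$'') does not involve dimensions. Here the bound is $-\dim(Y)+\dim(X)$, and an ambient embedding destroys it: if $b:Y\into\bA^N$ and $a=b\circ i$, then full faithfulness of $b_{*,dR}$ gives $i^{*,dR}=a^{*,dR}\,b_{*,dR}$, and applying the smooth case to $a$ yields only amplitude $\geq -(N-\dim X)$ --- the ``careless embedding into affine space'' you yourself warn against, which is, however, exactly what the \S\ref{ss:restr}-style reduction produces. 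Nor can one arrange a diagram \eqref{eq:aff-f} in which $g$ is a closed embedding of codimension $\dim(Y)-\dim(X)$ with the square sufficiently Cartesian to transfer the statement. So ``we may assume $X$ and $Y$ are smooth'' is not available, and your proof of the key estimate (factoring into smooth divisors), which is valid only for smooth $X\subset Y$, does not cover the theorem as stated.

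What the argument actually needs --- and what the paper invokes --- is the amplitude bound $i^!(D(Y)^{\heart})\subset D(X)^{[0,\dim(Y)-\dim(X)]}$ for closed embeddings of \emph{possibly singular} varieties; equivalently, for a coherent $D$-module on $\bA^N$ supported on $Y$, the functor $a^!$ has amplitude $\leq \dim(Y)-\dim(X)$, not merely $\leq N-\dim(X)$. The other ingredients pose no problem in the singular case: Theorem \ref{t:hol} is proved for arbitrary varieties, and Proposition \ref{p:dual-ss} transfers through Kashiwara's equivalence, with ``dimension of singular support'' read as holonomic defect. So the singular-variety amplitude bound for $i^!$ is the one genuinely new input, and it is precisely the step your proposal defers: your closing admission that carrying the estimate ``through the reduction to the smooth case is where the real work lies'' is accurate, but the reduction as you set it up would fail, so that work is not optional --- it is the content of the proof beyond what Theorem \ref{t:aff} already provides.
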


\begin{proof}

The argument is the same as the above:
one writes $i^{*,dR} = \bD i^! \bD$ and applies 
Theorem \ref{t:hol} and Proposition \ref{p:dual-ss},
plus the fact that $i^!$ has amplitude $\leq \dim(Y) - \dim(X)$.

\end{proof}

\bibliography{bibtex}{}
\bibliographystyle{alphanum}

\end{document}